\newtheorem{theorem}{Theorem}
\newtheorem{lemma}[theorem]{Lemma}% lemma counter will follow theorem counter
\numberwithin{equation}{section} % counter will add section number in front
\numberwithin{theorem}{section} % same as above
\newcommand{\la}{\lambda}
\newcommand{\DD}{\mathbb{D}}
\newcommand{\mfa}{\mathfrak{a}}
\newcommand{\mfg}{\mathfrak{g}}
\newcommand{\mfk}{\mathfrak{k}}
\newcommand{\mfp}{\mathfrak{p}}
\newcommand{\al}{\alpha}
\title[The bounded spherical functions]{The bounded spherical functions on the Cartan motion group}
\author{Sigurdur Helgason}
\begin{document}
\thanks{2010 MCS \ Primary 43A85, 22E46, 43A90; \ Secondary 22F30}

\begin{abstract}
The bounded spherical functions are determined for a complex Cartan motion group. 
\end{abstract}

\maketitle

%The notation in the book will be kept. Although the asymptotic properties of the spherical function $ \psi_{\la} $ (p. 467) has been studied by several authors (see e.g. Rader [1976]) the boundedness question (the analogy of Theorem 8.1) does not seem to have been settled. The following result us a step in this direction. 

%\begin{theorem*}
%Assume the group $ G $ is complex. Then the spherical function $ \psi_{\la} $ on $ G_0 $ is bounded if and only if $ \la $ is real, i.e. $ \la \in  \mfa^{\ast} $.
%\end{theorem*}

%In this context, consider Ch. IV, Theorem 3.3. 

\section{Introduction}
Consider a symmetric space $ X = G/K $ of noncompact type, $ G $ being a connected noncompact semisimple Lie group with finite center and $ K $ a maximal compact subgroup. Let $ \mfg = \mfk + \mfp $ be the corresponding Cartan decomposition, $ \mfp $ being the orthocomplement of $ \mfk $ relative to the Killing form $ B (= \langle \ , \ \rangle ) $ of $ \mfg $. Let $ \mfa \subset \mfp  $ be a maximal abelian subspace, $ \Sigma $ the set of root of $ \mfg  $ relative to $ \mfa $, $ \mfa^+ $ a fixed Weyl chamber and $ \Sigma^+ $ the set of roots $ \alpha $ positive on $ \mfa^+. $ Let $ \rho $ denote the half sum of the $ \alpha \in \Sigma^+ $ with multiplicity. The spherical functions on $ X $ (and $ G $) are by definition the $ K $-invariant joint eigenfunctions of the elements in $ \DD (X), $ the algebra of $ G $-invariant differential operators on $ X. $ By Harish-Chandra's result \cite{HC58} the spherical functions on $ X $ are given by 

\begin{equation}
\label{eq0}
\phi_{\la} (gK) = \int_{K} e^{(i \la - \rho) (H (gK))} \ dk, \quad \phi (eK) = 1, 
\end{equation}

\noindent where $ \exp H(g) $ is the $ A $ factor in the Iwasawa decomposition $ G = KAN $ ($ N $ nilpotent) and $ \la $ ranges over the space $ \mfa^{\ast}_c $ of complex-valued linear functions on $ \mfa. $ Also, $ \phi_{\la} \equiv \phi_{\mu} $ if and only if the elements $ \la, \mu \in \mfa^{\ast}_c $ are conjugate under $ W. $

Let $ L^{\natural}(G)  $ denote the (commutative) Banach algebra of $ K $-bi-invariant integrable functions on $ G. $ The maximal ideal space of $ L^{\natural}(G) $ is known to consist of the kernels of the spherical transforms  
\[ f \rightarrow \int_{G} f(g) \phi_{-\la} (g) \, dg  \]
 for which $ \phi_{-\la} $ is bounded. These bounded spherical functions were in \cite{HJ69} found to be those $ \phi_{\la} $ for which $ \la $ belongs to the tube $ a^{\ast} + i C (\rho) $ where $ C(\rho) $ is the convex hull of the points $s \rho (s \in W). $
 
 This result is crucial in proving that the horocycle Radon transform is injective on $ L^1(X) $ (\cite{H70}, Ch. II).
 
 \section{The boundedness criterion.}
 In this note we deal with the analogous question for the Cartan motion group $ G_0. $ This group is defined as the semidirect product of $ K $ and $ \mfp  $ with respect to the adjoint action of $ K $ on $ 
\mfp. $ The $ X_0 = G_0 / K $ is naturally identified with the Euclidean space $ \mfp. $ The element $ g_0 = (k,Y) $ actions on $ \mfp $ by
\[  g_0 (Y^{\prime}) = A d(k) Y^{\prime} + Y \quad k \in K, \ Y, Y^{\prime} \in \mfp, \]
\noindent so the algebra $ \DD (X_0) $ of $ G_0-$invariant differential operators on $ X_0 $ is identified with the algebra of $ Ad(K)-$invariant constant coefficient differential operators on $ \mfp. $ The corresponding spherical functions on $ X_0 $ are given by
\begin{equation}
\psi_{\la} (Y) = \int_{K} \ e^{i \la (Ad (k)Y)} \, dk \quad \la \in \mfa^{\ast}_c,
\end{equation}

\noindent and $ \psi_{\la} \equiv \psi_{\mu} $ if and only if $ \la $ and $ \mu $ are $ W-$conjugate. See e.g. \cite{H84}, IV \S 4. Again, the maximal ideal space of $ L^{\natural} (G_0) $ is up to $ W- $invariance identified with the set of $ \la$ in $ \mfa^{\ast}_c $  for which $ \psi_{\la}  $ is bounded. Since $ \rho $ is related to the curvature of $ G/K $ it is natural to expect the bounded $ \psi_{\la} $ to come from replacing $ C(\rho) $ by the origin, in other words $ \psi_{\la} $ is would be expected to be bounded if and only if $ \la $ is real, that is $ \la \in \mfa^{\ast}. $

The bounded criterion in \cite{HJ69} for $ X $ relies on Harish-Chandra's expansion for $ \phi_{\la}, $ combined with the reduction to the boundary components of $ X. $ These are certain subsymmetric spaces of $ X. $These tools are not available for $ X_0 $ so the ``tangent space analysis'' in \cite{H80} relies on approximating $ \psi_{\la}  $ by $ \phi_{\la} $ suitably modified. Although several papers (\cite{BC86}, \cite{R88}, \cite{SO05} ) are directed to asymptotic properties of the function $ \psi_{\la} $ the boundedness question does not seem to be addressed there. In this note we only give a partial solution through the following result. 

\begin{theorem}
Assume the group $ G $ complex. The spherical function $ \psi_{\la} $ on $ G_0 $ is bounded if and only if $ \la $ is real, i.e. $ \la \in \mfa^{\ast}. $
\end{theorem}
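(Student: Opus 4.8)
The plan is to reduce to $\mfa$ and to exploit the closed-form formula for $\psi_\la$ available when $G$ is complex. Since every $Y\in\mfp$ is $\mathrm{Ad}(K)$-conjugate into $\mfa$ and $\psi_\la$ is $K$-invariant, it suffices to study $H\mapsto\psi_\la(H)$ for $H\in\mfa$. For $G$ complex the restricted roots are reduced with all multiplicities $2$, and the orbital integral defining $\psi_\la$ has the closed form
\begin{equation}
\label{eqflat}
\psi_\la(H)=c_0\,\frac{\sum_{s\in W}(\det s)\,e^{i(s\la)(H)}}{\pi(i\la)\,\pi(iH)},\qquad \pi(\xi)=\prod_{\al\in\Sigma^+}\langle\al,\xi\rangle ,
\end{equation}
with $c_0$ depending only on $\Sigma$; this is the flat analogue of Harish-Chandra's formula in the complex case, valid for regular $\la$ and extended to all $\la$ by the limit $\la'\to\la$, in which the numerator and $\pi(i\la')$ vanish to the same order. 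One implication is immediate and uses only the integral: if $\la\in\mfa^\ast$ is real then $|e^{i\la(\mathrm{Ad}(k)H)}|=1$, so $|\psi_\la(H)|\le 1$ and $\psi_\la$ is bounded.

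For the converse, write $\la=\mu+i\nu$ with $\mu,\nu\in\mfa^\ast$ and suppose $\nu\neq 0$; since $\psi_\la=\psi_{s\la}$ we may conjugate by $W$ and take $\nu$ dominant. The $s$-th term of the numerator of \eqref{eqflat} has modulus $e^{-(s\nu)(H)}$, so along a ray $H=tH_0$ with $H_0\in\mfa^+$ regular and $t\to+\infty$ its modulus is $e^{-t(s\nu)(H_0)}$. The maximal growth rate is $-t\min_{s}(s\nu)(H_0)=t\,(-\nu_-)(H_0)$, where $\nu_-=w_0\nu$ is the antidominant element of the orbit $W\nu$. Now $-\nu_-$ is a nonzero dominant functional and $H_0$ lies in the open chamber, so $(-\nu_-)(H_0)>0$: identifying $\mfa^\ast$ with $\mfa$ by $\langle\ ,\ \rangle$ and expanding in fundamental weights $\omega_i$, the inner products $\langle\omega_i,\omega_j\rangle$ are nonnegative with positive diagonal (the inverse Cartan matrix is nonnegative), whence a nonzero dominant vector pairs strictly positively with a regular dominant one. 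Thus the numerator grows like $e^{t(-\nu_-)(H_0)}$, while $\pi(i\la)\pi(iH)$ is a nonzero constant times a polynomial in $t$.

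It remains to exclude cancellation at this maximal rate. The set $S=\{s\in W:(s\nu)(H_0)=\nu_-(H_0)\}$ equals the coset $s_0W_\nu$ for any fixed $s_0$ with $s_0\nu=\nu_-$, where $W_\nu=\{s:s\nu=\nu\}$; on $S$ the common modulus factors out and the remaining coefficient is, up to a nonzero constant, the $W_\nu$-antisymmetrization $A(tH_1)=\sum_{r\in W_\nu}(\det r)\,e^{it\,\mu(rH_1)}$, with $H_1$ in the $W$-orbit of $H_0$. When $\la$ is regular, no reflection $r_\al\in W_\nu$ (necessarily with $\langle\al,\nu\rangle=0$) can fix $\mu$, since otherwise $r_\al\la=\la$; hence $\mu$ is $W_\nu$-regular and $A\not\equiv 0$. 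The lowest homogeneous part of $A$ at the origin is a nonzero multiple of $\pi_\nu(\mu)\,\pi_\nu(H_1)$, where $\pi_\nu$ is the product over the positive roots of $W_\nu$; consequently, for $H_0$ chosen generically in $\mfa^+$ the function $t\mapsto A(tH_1)$ is not identically zero, and being almost periodic it has positive mean square, so $\limsup_{t\to\infty}|A(tH_1)|>0$. Combined with the previous paragraph this gives $\limsup_{t\to\infty}|\psi_\la(tH_0)|=\infty$, so $\psi_\la$ is unbounded.

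For singular $\la$ one argues with the limiting form of \eqref{eqflat}: differentiating the numerator in the degenerate directions produces $\psi_\la(H)\,\pi(iH)=\sum_{\eta\in W\la}p_\eta(H)\,e^{i\eta(H)}$ with polynomial coefficients $p_\eta$, so the maximal exponential rate is unchanged, namely $t\,(-\nu_-)(H_0)$, and is now multiplied by a polynomial of positive degree, which only reinforces the growth; a genericity argument as above shows this coefficient does not vanish along a suitable ray, and again $\psi_\la$ is unbounded. The crux of the proof is this no-cancellation step: the exponential rate is read off at once, but one must check that the leading Weyl-block coefficient, an antisymmetric exponential sum, is not annihilated, which is exactly where the regularity of $\la$ (forcing $W_\nu$-regularity of $\mu$) enters.
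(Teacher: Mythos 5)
Your overall route is the same as the paper's: the closed formula for $\psi_\la$ in the complex case, identification along a ray $tH_0$, $H_0\in\mfa^+$, of the dominant Weyl block (the coset of the stabilizer of the imaginary part), and reduction to a non-cancellation statement for the coefficient of that block. The easy direction and the case of regular $\la$ are handled correctly; indeed your treatment of regular $\la$ with singular $\nu$, via the $W_\nu$-antisymmetrization $A$ and its lowest homogeneous part $\pi_\nu(\mu)\pi_\nu(H_1)$, is clean and covers more than the paper's first case, which assumes the imaginary part itself regular.

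The gap is the singular case. The sentence ``a genericity argument as above shows this coefficient does not vanish along a suitable ray'' asserts exactly the step that carries the paper's entire technical weight. After applying $\partial(\pi')$ the coefficient of the dominant exponential $e^{-t\nu_-(H_0)}$ is no longer an antisymmetrized sum of pure exponentials but an exponential polynomial $\sum_{s} p_s(tH_0)\,e^{i(s\mu)(tH_0)}$ over the relevant coset, with polynomial coefficients $p_s$; after grouping by cosets of the stabilizer of $\la$ one must show that some coset sum $\sum_{\sigma} p_{s\sigma}$ is a nonzero polynomial. The leading homogeneous part of $p_{s\sigma}$ is a constant times $\epsilon(s\sigma)\,(s\sigma\pi')(H)/\pi''(\la_0)$, and these could in principle cancel over $\sigma$ in the stabilizer. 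The paper rules this out by first normalizing $\xi_0$ to be maximal in its $V$-orbit, then proving Lemma~\ref{lemma2} (the stabilizer of $\la_0$ is generated by simple reflections in roots vanishing at $A_{\la_0}$), and deducing $\sigma\pi'=\epsilon(\sigma)\pi'$ on the stabilizer, so the leading parts add to a nonzero multiple of $\pi'(H)$; it then still needs Harish--Chandra's lemma on exponential polynomials to convert a nonvanishing polynomial coefficient into a positive $\limsup$. None of this is in your proposal, and your ``as above'' argument (the lowest homogeneous part of a pure antisymmetrization) does not apply to the differentiated coefficients. Your remark that the polynomial factor ``only reinforces the growth'' is also misleading: a polynomial times an exponential reinforces nothing if the total coefficient of the block vanishes identically, which is precisely what must be excluded.
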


For $ \la \in \mfa^{\ast}_c $ let $ \la = \xi + i \eta $ with $ \xi, \eta \in \mfa^{\ast}. $ It remains to prove that if $ \la_0 = \xi_0 + i \eta_0 $ with $ \eta_0 \neq 0 $ then $ \psi_{\la_0} $ is unbounded. For $ \la \in \mfa^{\ast}_c $ let $ A_{\la} \in \mfa_c $ be determined by $ \langle A_{\la}, H \rangle = \la (H) \ (H \in \mfa)$. With $ i \la_0 = i \xi_0 - \eta_0 $ we may by the $ W $-invariance of $ \psi_{\la} $ in $ \la $ assume that $ -A_{\eta_0} \in \overline{\mfa^+} $ (the closure of $ \mfa^+. $)

Let $ U \subset W $ be the subgroup fixing $ \la_0 $ and $ V \subset W $ the subgroup fixing $ \eta_0. $ Then $ U \subset V $ and

\begin{equation}
\label{eq1}
\psi_{s \xi_0 + i \eta_0} = \psi_{\xi_0 + i \eta_0 } \quad \mbox{ for } s \in V.
\end{equation}

In addition we assume that for the lexicographic ordering of $ \mfa^{\ast} $ defined by the simple roots $ \alpha_1, \ldots, \alpha_{\ell} $ we have $ \xi_0 \geq s \xi_0 $ for $ s \in V. $

In particular, 

\begin{equation}
\label{eq1.5}
\alpha (A_{\xi_0}) \geq 0 \quad \mbox{for } \alpha \in \Sigma^+ \mbox{ satisfying } \alpha (A_{\eta_0}) = 0.
\end{equation}

\begin{lemma}
\label{lemma2}
The subgroup $ U $ of $ W $ fixing $ \la_0 $ is generated by the reflections $ s_{\alpha_i} $ where $ \alpha_i $ is a simple root vanishing at $ A_{\la_0}. $
\end{lemma}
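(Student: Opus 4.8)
The plan is to deduce the lemma from Chevalley's classical result that the stabilizer in a finite reflection group of a point lying in the closed fundamental chamber is generated by the simple reflections that fix it. Because $\la_0$ is complex one cannot apply this in a single stroke; instead I would apply it twice, peeling off the imaginary and the real part in turn. The first step is purely formal: since $W$ acts on $\mfa$ and $\mfa^*$ by real-linear maps and the correspondence $\la \mapsto A_\la$ is $W$-equivariant (it is defined through the $W$-invariant form $\langle\ ,\ \rangle$), an element $s \in W$ fixes $\la_0 = \xi_0 + i\eta_0$ if and only if it fixes both $A_{\xi_0}$ and $A_{\eta_0}$. Hence $U = \{s \in V : s A_{\xi_0} = A_{\xi_0}\}$ is precisely the stabilizer of $A_{\xi_0}$ inside $V = \mathrm{Stab}_W(A_{\eta_0})$.

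Next I would describe $V$ itself as a reflection group. Since $\mathrm{Stab}_W(A_{\eta_0}) = \mathrm{Stab}_W(-A_{\eta_0})$ and $-A_{\eta_0} \in \overline{\mfa^+}$, Chevalley's lemma shows that $V$ is generated by the $s_{\alpha_i}$ with $\alpha_i(A_{\eta_0}) = 0$. Moreover $\Sigma_V = \{\alpha \in \Sigma : \alpha(A_{\eta_0}) = 0\}$ is a root subsystem whose simple roots, for the ordering induced from $\Sigma^+$, are exactly the simple roots $\alpha_i$ of $\Sigma$ vanishing at $A_{\eta_0}$. Thus $V$ is again a finite reflection group, with root system $\Sigma_V$ and positive system $\Sigma_V^+ = \Sigma_V \cap \Sigma^+$, to which Chevalley's lemma applies a second time.

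The third step runs that second application. The inequality \eqref{eq1.5} states precisely that $\alpha(A_{\xi_0}) \geq 0$ for every $\alpha \in \Sigma_V^+$, i.e. that $A_{\xi_0}$ lies in the closure of the positive chamber of $V$. Chevalley's lemma applied inside $V$ then gives that $U = \mathrm{Stab}_V(A_{\xi_0})$ is generated by the simple reflections of $V$ fixing $A_{\xi_0}$, namely the $s_{\alpha_i}$ with $\alpha_i$ simple in $\Sigma_V$ and $\alpha_i(A_{\xi_0}) = 0$. These are exactly the $s_{\alpha_i}$ attached to simple roots $\alpha_i$ of $\Sigma$ with $\alpha_i(A_{\eta_0}) = 0$ and $\alpha_i(A_{\xi_0}) = 0$, equivalently $\alpha_i(A_{\la_0}) = 0$, which is the assertion of the lemma.

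The step I expect to be the main obstacle is the second one: verifying that the roots vanishing at $A_{\eta_0}$ form a subsystem $\Sigma_V$ whose simple roots are precisely the vanishing simple roots of $\Sigma$, so that the second invocation of Chevalley's lemma is genuinely carried out with the same reflections $s_{\alpha_i}$. This, together with the reading of \eqref{eq1.5} as the dominance of $A_{\xi_0}$ relative to $\Sigma_V$, is what makes the two-stage reduction close up cleanly; both facts are standard once $-A_{\eta_0}$ is known to lie in $\overline{\mfa^+}$, which is guaranteed by the normalizations made before the lemma.
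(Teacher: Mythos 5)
Your proof is correct, and it reaches the lemma by a genuinely different route than the paper. You split $\la_0$ into its real and imaginary parts and invoke the chamber form of Chevalley's lemma twice: first at $-A_{\eta_0}\in\overline{\mfa^+}$ to identify $V$ with the Weyl group of the parabolic subsystem $\Sigma_V=\{\al\in\Sigma:\al(A_{\eta_0})=0\}$ with simple system the vanishing $\al_i$, and then inside $V$ at the point $A_{\xi_0}$, which (\ref{eq1.5}) makes dominant for $\Sigma_V^+$. The paper instead works with the complex point $\la_0$ in one pass: it quotes only the weaker statement (\cite{H78}, VII, Theorem 2.15) that $U$ is generated by the $s_{\al}$ with $\al>0$ vanishing on $\la_0$, checks that every simple constituent $\al_j$ of such an $\al$ also vanishes at $A_{\la_0}$ (the same positivity computation, via $\al_j(-A_{\eta_0})\ge 0$ and (\ref{eq1.5}), that underlies your parabolic-subsystem step), and then upgrades $s_{\al}$ to a product of simple reflections by an explicit induction on the height $\sum m_i$, conjugating $\al$ to a vanishing simple root and using $s_{\al}=s\,s_{\al_p}s^{-1}$. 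Your version buys modularity and avoids the induction entirely, at the cost of having to verify that $\Sigma_V$ is indeed a subsystem whose simple roots are the vanishing $\al_i$ and that $V$ is its full reflection group (both standard, and correctly flagged by you as the point needing care); the paper's version is more self-contained, in effect reproving the chamber form of Chevalley's lemma in the situation at hand rather than citing it.
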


\begin{proof}
We first prove that some of the $ \al_i  $ vanishes at $ A_{\la_0}. $ The group $ U $ is generated by the $ s_{\al} $ for which $ \al > 0 $ vanishes on $ \la_0 $ (\cite{H78}, VII, Theorem 2.15). If $ \al $ is such then $ \al (-A_{\eta_0}) = 0 $ and since $ \al = \Sigma_j n_j \al_j \ (n_j \neq 0 \mbox{ in } \mathbb{Z}^+)$ and $ \al_j (-A_{\eta_0} )\geq 0 $ we see that each of these $ \al_j $ vanishes on $ A_{-\eta_0}. $ Since $ \al (A_{\xi_0}) = 0 $ and $ \al_j (A_{\xi_0}) \geq 0 $ by (\ref{eq1.5}) for each $ j $ we deduce $ \al_j (A_{\xi_0}) =0. $

Let $ U^{\prime} $ denote the subgroup $ U $ generated by those $ s_{\al_i} $ with $ \al_i $ vanishing at $ \la_0. $ For each $ \al > 0 $ mentioned above we shall prove $ \al = s \al_p $ where $ s \in U^{\prime} $ and $ \al_p $ is simple and vanishes at $ A_{\la_0}. $ We shall prove this by induction on $ \sum_{i} m_i $ if $ \al = \sum m_i \al_i $ ($ m_i \neq 0$ in $ \mathbb{Z}^+ $). The statement is clear if $ \sum m_i = 1 $ so assume $ \sum m_i > 1. $ Since $ \langle \al, \al \rangle > 0$ we have $ \langle \al, \al_k \rangle > 0$ for some $ k $ among the indices $ i $ above. Then $ \al \neq \al_k $ (by $ \sum m_i > 1 $). Since $ s_{\al_k} $ permutes the positive roots $ \neq \al_k $ we have $ s_{\al_k} \al \in \sum^+ $ and $ s_{\al_k} \al = \sum_j m^{\prime}_j \al_j (m^{\prime}_j \in \mathbb{Z}^+)$ and by the choice of $ k, \sum m^{\prime}_j < \sum m_i. $ Now $ \al (A_{i \la_0}) =0 $ and $ \al_i (A_{-\eta_0}) \geq 0 $ so for each $ i  $ in the sum for $ \al $ above, $ \al_i (A_{\eta_0}) = 0. $ Hence by (\ref{eq1.5}) $ \al_i (A_{\xi_0}) = 0. $ In particular $ s_{\al_k} \in U. $ Thus the induction assumption applies to $ s_{\al_k} \al $ giving a $ s^{\prime} \in U^{\prime} $ for which $ s_{\al_k} \al = s^{\prime} \al_p. $ Hence $ \al = s \al_p $ with $ s \in U^{\prime}.  $ But then $ s_{\al} = s s_{\al_k} s^{-1} $ proving the lemma.
\end{proof}

Using Harish-Chandra's integral formula \cite{HC57} Theorem 2 we have

\begin{equation}
\label{eq2}
\psi_{\la} (\mathrm{exp} H) = c_0 \frac{\sum_{s \in W} \epsilon (s) e^{i \langle sA_{\la}, H \rangle}}{\pi(H) \, \pi (A_{\la}) } \qquad \langle H \in \mfa \rangle, 
\end{equation}

\noindent where $ c_0 $ is a constant, $ \langle \ , \ \rangle $  the Killing form, $ \epsilon (s) = \mathrm{det} \, s $ and $ \pi $ the product of the positive roots. If $ \eta_0 $ is regular so $ -A_{ \eta_0} \in \mfa^+ $ then $ V = U = \{ e \} $ and $ \pi (A_{\la_0}) \neq 0. $ Fix $ H_0 \in \mfa^+ $ and $ \la = \la_0 $ in the sum (\ref{eq2}). With $ H = t H_0 (t > 0) $ the term in (\ref{eq2}) with $ s = e $ will outweigh all the others as $ t \rightarrow + \infty $ so $ \psi_{\la} $ is unbounded. 

We now consider the case $ \pi (A_{\la_0}) = 0. $ 

Let $ \pi^{\prime} $ denote the product of the positive roots $ \beta_1, \ldots, \beta_r $ vanishing at $ \la_0 $ and $ \pi^{\prime \prime} $ the product of the remaining positive roots. For $ \la = \la_0 $ we want to divide the factor $\pi^{\prime} (\la_0)  $ into the numerator of (\ref{eq2}). We do this by multiplying (\ref{eq2}) by $\pi^{\prime} (\la),  $ then applying the differential operator $ \partial (\pi^{\prime}) $ in the variable $ \la $ and finally setting $ \la = \la_0. $ The theorem then follows from the following lemma. 

\begin{lemma}
\label{lemma3}
Let $ \eta_0 \neq 0. $ Then the function
\[   \zeta_{\la} (H) = \frac{\sum_{s \in W} \epsilon (s) e^{i \langle sA_{\la}, H \rangle}}{\pi(A_{\la}) } \]
\noindent is for the case $ \la = \la_0 $ unbounded on $ \mfa^+. $
\end{lemma}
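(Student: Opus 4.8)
The plan is to turn the regularization sketched before the lemma into a closed expression for $\zeta_{\la_0}$ and then read off its growth along a single ray in $\mfa^+$. Interpreting $\zeta_{\la_0}$ by the stated prescription (multiply $\zeta_\la=N/\pi(A_\la)$ by $\pi'(\la)=\pi'(A_\la)$, apply $\partial(\pi')$ in $\la$, and set $\la=\la_0$), and writing $N=\sum_{s\in W}\epsilon(s)e^{i\langle sA_\la,H\rangle}$, I would first note that $\pi'(\la)\zeta_\la=N/\pi''(A_\la)$ and that $N$ is $W$-antisymmetric in $A_\la$, hence divisible by $\pi'$ and so vanishing at $\la_0$ to the order $r$ of $\pi'$. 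Consequently every Leibniz term in $\partial(\pi')\bigl(N/\pi''\bigr)$ carrying fewer than $r$ derivatives on $N$ dies at $\la_0$, leaving
\[
\zeta_{\la_0}(H)=\frac{i^{r}}{\pi''(A_{\la_0})}\sum_{s\in W}\epsilon(s)\,\pi'(s^{-1}H)\,e^{i\langle sA_{\la_0},H\rangle},\qquad \pi'(s^{-1}H)=\prod_{j=1}^{r}\beta_j(s^{-1}H).
\]
Here Lemma~\ref{lemma2} is essential: since $U$ is generated by reflections in the $\beta_j$, the polynomial $\pi'$ is $U$-antisymmetric with the sign $\epsilon$, so under $s\mapsto su$ $(u\in U)$ the product $\epsilon(s)\pi'(s^{-1}H)$ and the exponential $e^{i\langle sA_{\la_0},H\rangle}$ are both unchanged; the $|U|$ members of each coset reinforce instead of cancelling, and the sum is not identically zero.

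Next I would isolate the leading exponential. With $A_{\la_0}=A_{\xi_0}+iA_{\eta_0}$ one has $\lvert e^{i\langle sA_{\la_0},H\rangle}\rvert=e^{\langle s(-A_{\eta_0}),H\rangle}$. I would choose $H_0\in\mfa^+$ close to $-A_{\eta_0}\in\overline{\mfa^+}$, so that $m:=\langle -A_{\eta_0},H_0\rangle>0$, and generic enough that $\pi'(s^{-1}H_0)\neq0$ for all $s$ and that $\langle sA_{\xi_0},H_0\rangle$ separates the cosets of $U$ in $V$. Because $-A_{\eta_0}\in\overline{\mfa^+}$ and $H_0$ is regular, $\langle s(-A_{\eta_0}),H_0\rangle\le m$ with equality exactly for $s\in V$ and a gap $\delta>0$ otherwise. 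Substituting $H=tH_0$ and using that $\pi'$ is homogeneous of degree $r$ gives
\[
\zeta_{\la_0}(tH_0)=\frac{i^{r}t^{r}e^{tm}}{\pi''(A_{\la_0})}\bigl(Q(t)+R(t)\bigr),\qquad Q(t)=\sum_{s\in V}\epsilon(s)\,\pi'(s^{-1}H_0)\,e^{it\langle sA_{\xi_0},H_0\rangle},
\]
where $R(t)=O(e^{-\delta t})$ collects the contributions of $s\notin V$.

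To conclude I would argue by contradiction. Grouping $V$ into cosets of $U=\mathrm{Stab}_V(A_{\xi_0})$ turns $Q$ into a trigonometric polynomial $\sum_k c_k e^{i\omega_k t}$ with distinct frequencies $\omega_k=\langle t_kA_{\xi_0},H_0\rangle$ and coefficients $c_k=|U|\,\epsilon(t_k)\,\pi'(t_k^{-1}H_0)\neq0$, so $Q\not\equiv0$. If $\zeta_{\la_0}$ were bounded on $\mfa^+$, then $t^{r}e^{tm}\lvert Q(t)+R(t)\rvert$ would be bounded; since $t^{r}e^{tm}\to\infty$ this forces $Q(t)\to0$, which is impossible for a nonzero trigonometric polynomial, each $c_k$ being recovered as $\lim_{T\to\infty}\frac1T\int_0^{T}Q(t)e^{-i\omega_k t}\,dt$. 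Hence $\zeta_{\la_0}$ is unbounded on $\mfa^+$. The theorem then follows, for $\psi_{\la_0}(\exp H)$ is $\zeta_{\la_0}(H)$ divided by the polynomial $\pi(H)$, and the exponential factor $e^{tm}$ just exhibited outgrows $\pi(tH_0)$.

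The step I expect to be the main obstacle is securing the nonvanishing of the regularized leading term twice over: first that $\partial(\pi')$ does not annihilate the degenerate sum $N$, which is exactly where $\eta_0\neq0$, the ordering condition \eqref{eq1.5}, and the description of $U$ in Lemma~\ref{lemma2} enter to make the $U$-cosets reinforce; and second that the oscillatory coefficient $Q(t)$ does not decay, so that the growing real exponential $e^{tm}$ is genuinely visible rather than washed out by cancellation.
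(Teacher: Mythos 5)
Your proposal is correct and follows essentially the same route as the paper: the same $\partial(\pi')$ regularization of (\ref{eq2}), the same split of the Weyl sum into $V$ and $W\setminus V$ with the strict inequality $\langle H',H_0\rangle>\langle sH',H_0\rangle$ for $s\notin V$ controlling the second part, and the same use of Lemma \ref{lemma2} (via $\epsilon'=\epsilon$ on $U$) to make the $U$-cosets reinforce. Your one genuine refinement is the observation that, because the numerator $N$ is $W$-skew and hence divisible by $\pi'$, all Leibniz terms with fewer than $r$ derivatives on $N$ vanish at $\la_0$, so only the top-degree parts of the paper's polynomials $P_s$ survive; this yields an exact closed formula (equal to $c\,\zeta_{\la_0}$, with the constant $c$ you suppress) whose coefficients are constants rather than polynomials in $t$, allowing the elementary Bohr-mean argument to replace Harish-Chandra's limsup lemma (\ref{eq7.5})--(\ref{eq8}).
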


\begin{proof}
We have 
\[ \pi^{\prime} (\la) \zeta_{\la} (H) = \frac{1}{\pi^{\prime \prime} (\la)} \sum_{s \in W} \epsilon(s) e^{i \langle sA_{\la}, H \rangle}. \]
Applying $ \partial (\pi^{\prime}) = \partial (\beta_1) \ldots \partial (\beta_r)  $ in $ \la  $ and putting $ \la = \la_0 $ we see that

\begin{equation}
\label{eq3}
c \ \zeta_{\la_0} (H) = \sum_{s \in W} P_s (H) e^{i \langle sA_{\la_0}, H \rangle}.
\end{equation}

Here $ c  $ is a constant and $ P_s $ the polynomial
\[ P_s(H) = \left[ \partial (\pi^{\prime})_{\la} \left( \epsilon (s) \frac{1}{\pi^{\prime \prime} (\la)} e^{i s \la (H)}  \right) \right]_{\la = \la_0} \, e^{-i s \la_0 (H)}\]
\noindent whose highest degree term is a constant times 

\begin{equation}
\label{eq4}
\epsilon(s)\frac{1}{\pi^{\prime \prime} (\la_0)} (s \pi^{\prime}) (H).
\end{equation}

We do not need the exact value of $ c $ but for $ r = 2,3, $ respectively, it equals (with $ x_{ij} = \langle \alpha_i, \alpha_j \rangle $)

\[ x^2_{12} + x_{11}^{} x_{22}^{}, \quad x_{11}^{} x_{23}^2 + x_{22}^{}x_{13}^2 + x_{33}^{}x_{12}^2 + x_{11}^{}x_{22}^{}x_{33}^{} + 2 x_{12}^{}x_{13}^{}x_{23}^{}. \]

We break the sum (\ref{eq3}) into two parts, sum over $ V $ and sum over $ W \backslash V. $ For the first we consider $ \Sigma_V $ as $ \Sigma_{V/U} \, \Sigma_U. $ Then (\ref{eq3}) can be written

\begin{equation}
\label{eq5}
c \ \zeta_{\la_0} (H) = e^{- \eta_0 (H)}   \bigg[ \sum_{V/U} e^{is \xi_0 (H)} \sum_{\sigma \in U} P_{s \sigma} (H) \bigg] + \sum_{W \backslash V} P_s (H) e^{is \la_0 (H)} .
\end{equation}

We put here $ H^{\prime} = -A_{\eta_0}, $ let $ H_0 \in \mfa^+ $ be arbitrary and set $ H = tH_0 (t > 0). $ Then the second term in (\ref{eq5}) equals

\begin{equation}
\label{eq6}
\sum_{s \notin V} P_s (tH_0) e^{is \xi_0 (tH_0)} \, e^{\langle sH^{\prime}, tH_0 \rangle}.
\end{equation}
By a standard property of $ \mfa^+  $ we have 

\[ \langle H_1, H_2 \rangle \geq \langle sH_1, H_2 \rangle \quad \mbox{ if } H_1, H_2 \in \mfa^+ \]
so taking limit, 
\[ \langle sH^{\prime} - H^{\prime}, H \rangle \leq 0, \quad H \in \mfa^+. \]

If $ s \notin V $ then $sH^{\prime} - H^{\prime} \neq 0  $. Thus  the map $ H \rightarrow \langle sH^{\prime} - H^{\prime}, H \rangle  $ is open from $ \mfa $ to $ \mathbb{R} $ mapping $ \mfa^+ $ into $ \{ t \leq 0  \}, $ not taking there the boundary value 0. Hence we get

\begin{equation}
\label{eq7}
\langle H^{\prime}, H_0 \rangle > \langle H^{\prime}, sH_0 \rangle \quad \mbox{ for } s \notin V.
\end{equation}

Equivalently, $ \mathrm{dist} \, (H_0, H^{\prime}) < \mathrm{dist} \, (H_0, sH^{\prime} )$ \ for $ s \notin V. $

Consider $ (\ref{eq5}) $ with $ H = tH_0. $ Assume the expression in the bracket has absolute value with $ \limsup_{t \rightarrow + \infty}  \neq 0. $ Considering (\ref{eq7}) the first term in (\ref{eq5}) would have exponential growth larger than that of each term in (\ref{eq6}). 

Thus $ c \neq 0 $ and 
\[ \lim_{t \rightarrow + \infty} \zeta_{\la_0} (tH_0) = \infty \]
\noindent implying Lemma \ref{lemma3} in this case. 

We shall now exclude the possibility that the quantity in the bracket in (\ref{eq5}) (with $ H = tH_0 $) has absolute value with $ \lim \mathrm{sup}_{t \rightarrow  \infty} = 0. $  For this we use the following elementary result of Harish--Chandra \cite{HC58}, Corollary of Lemma 56: Let $ a_1, \ldots a_n $ be nonzero complex numbers and $ p_0, \ldots p_n $ polynomials with complex coefficients. 

Suppose

\begin{equation}
\label{eq7.5}
\limsup_{t \rightarrow \infty} \bigg{|} p_0 (t) + \sum_{j =1}^{n} p_j (t) e^{a_j t} \bigg{|} \leq a
\end{equation}

\noindent for some $ a \in \mathbb{R}. $ Then $ p_0 $ is a constant and $ |p_0| \leq a. $ This implies the following result.

Let $ k_1 \ldots k_n \in \mathbb{R} $ be different and $ p_1, \ldots, p_n $ polynomials. If 

\begin{equation}
\label{eq8}
\limsup_{t \rightarrow + \infty}  \bigg| \sum_{1}^{n} e^{ik_r t} p_r(t) \bigg| = a < \infty
\end{equation}
\noindent then each $ p_r $ is constant. If $ a = 0 $ then each $ p_r = 0. $ This follows from (\ref{eq7.5}) by writing the above sum as  
\[ e^{ik_rt} \left( p_r (t) + \sum_{j \neq r} \ e^{i(k_j - k_r)t} p_j(t)\right).  \]

Note that in the sum

\begin{equation}
\label{eq9}
\sum_{V/U} e^{is \xi_0 (tH_0)} \, \sum_{\sigma \in U} P_{s \sigma} (tH_0)
\end{equation}

\noindent all the terms $ s \xi_0 $ are different ($ s_1, s_2 \in V $ with $ s_1 \xi_0 = s_2 \xi_0 $ implies $ s^{-1}_2 s_1 \in U $).  Thus we can choose $ H_0 \in \mfa^+ $ such that all $ s \xi_0 (H_0) $ are different. 

We shall now show that one of the polynomial in (\ref{eq9}), namely the one for $ s = e, $

\begin{equation}
\label{2.13} 
\sum_{\sigma \in U} P_{\sigma} (tH_0)
\end{equation}
\noindent is not identically 0. For this note that the highest degree term in $ P_{\sigma} $ is a constant (independent of $ \sigma  $) times

\begin{equation}
\label{2.14}
\epsilon (\sigma) \frac{1}{\pi^{\prime\prime}(\la_0)} (\sigma \pi^{\prime\prime}) (tH_0). 
\end{equation}

Now each $ \sigma $ permutes the roots vanishing at $ A_{\la_0}. $ Hence $ \sigma \pi^{\prime} = \epsilon^{\prime}(\sigma) \pi^{\prime}$ where $ \sigma\rightarrow \epsilon^{\prime}(\sigma) $ is a homomorphism of $ U $ into $ \mathbb{R} $. We now use Lemma \ref{lemma2}. Since each $ s_{\al_i} \in U $ maps $ \al_i $ into $ -\al_i $ and permutes the other positive roots vanishing at $ \la_0 $ we see that $ \epsilon^{\prime} (s_{\al_i}) = -1 = \epsilon (s_{\al_i}). $ Thus by Lemma \ref{lemma2} $ \epsilon^{\prime} (\sigma) = \epsilon (\sigma) $ for each $ \sigma \in U. $ Thus (\ref{2.14}) reduces to

\[ \frac{1}{\pi^{\prime\prime}}\pi^{\prime} (tH_0). \]

This shows that the polynomial in (\ref{2.13}) is not identically 0. In view of (\ref{eq8}) this shows that the $ \limsup $ discussed is $ \neq 0 $ and Lemma \ref{lemma3} established.

\end{proof}

I thank Mogens Flensted-Jensen and Angela Pasquale for useful discussions.

\end{document}